\newtheorem{thm}{Theorem}
\newtheorem{lem}[thm]{Lemma}
\newtheorem{cor}{Corollary}[thm]
\newtheorem{fact}[thm]{Fact}
\theoremstyle{definition}
\theoremstyle{remark}
\numberwithin{equation}{section}
\begin{document}

\title{An example on volumes jumping over Zariski dense set}
\author{Lue Pan}
\address{Lue Pan: Department of Mathematics, Princeton University, Fine Hall, Washington Road, Princeton, NJ 08540}
\email{lpan@princeton.edu}

\author{Junliang Shen}
\address{Junliang Shen: Departement Mathematik, ETH Z\"urich, R\"amistrasse 101, 8092 Z\"urich, Switzerland}
\email{junliang.shen@math.ethz.ch}

\thanks{The subject classification codes: 14C20, 14D99}
   
  \begin{abstract}
We give an example that the volume of an $\mathbb{R}$-divisor on a family of complex smooth surfaces jumps at infinite many prime divisors in the base. Our example follows the construction in \cite{1}.
  \end{abstract}
  
  \maketitle
  
   We work over the complex number $\mathbb{C}$. The volume of a line bundle $L$ on an irreducible projective variety $X$ of dimension $n$ is defined to be the nonnegative real number
  
\[
  {\rm vol}(L) = {\rm vol}_X(L) =  \limsup_{m \rightarrow \infty} \frac { h^0(X,L^{\otimes m})}{m^n/n!}
 \]
  For a Cartier divisor $D$, the volume $vol(D)$ is just the volume of the corresponding line bundle $\mathcal{O}_X(D)$. It is clear that ${\rm vol}(L) > 0$ if and only if $L$ is big. We use $N^1 (X)$ to denote the group of $\mathbb{R}$-divisors on X modulo numerical equivalence, the volume function can be uniquely extended as a continuous function on  $N^1 (X)$. More details could be found in \cite{2}. 
    
  Recall that by semicontinuity theorem, the volume of an $\mathbb{R}$-divisor on a family of varieties has to be constant over very general fibers. However, in most examples, the locus of volume's jump is Zariski closed. We will show that there exists an $\mathbb{R}$-divisor on a family of smooth surfaces obtained by  blow-up of $\mathbb{P}^2$, with volume jumping at a Zariski dense subset. The idea of construction comes from \cite{1}.
  
   Our main result is the following theorem:
  
 \begin{thm} \label{paper1}
 Let $\Sigma = (\mathbb{P}^2)^{10} \backslash \Delta$, where $\Delta$ is the locus where two points coincide and W is the subvariety of $\Sigma$ with the property that all these ten points lie on an elliptic curve. Suppose $\mathcal{X} \longrightarrow W$ is the family over $W$ whose fiber over $\mathbf{p}$ (written as $X_\mathbf{p}$) is the blow-up of $ \mathbb{P}^2$ at the corresponding ten points. There exists an $\mathbb{R}$-divisor $C$ on $\mathcal{X}$ such that $C_{ \mathbf{p}}$ is not big on $X_\mathbf{p}$ for very general $\mathbf{p}$, but there are countably many prime divisors $W_n \subset W$ such that $C_{\mathbf{p}}$ is big for general $\mathbf{p} \in W_n$.
 \end{thm}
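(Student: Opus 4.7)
The plan is as follows. We identify $N^1(X_\mathbf p)$, as $\mathbf p$ varies over $W$, with the fixed lattice $\mathbb Z\langle H, E_1,\ldots,E_{10}\rangle$; in this identification the strict transform $\tilde E_\mathbf p$ has class $\tilde E=3H-\sum_{i=1}^{10}E_i$ with $\tilde E^2=-1$, and an integer class $D=aH-\sum b_iE_i$ restricts to the degree $3a-\sum b_i$ line bundle $\mathcal O_{E_\mathbf p}(aH|_{E_\mathbf p}-\sum b_ip_i)$ on $\tilde E_\mathbf p\cong E_\mathbf p$.

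For each integer tuple $\mathbf m=(m_0;m_1,\ldots,m_{10})$ with $3m_0=\sum m_i$, the degree zero line bundle
\[
\mathcal L_\mathbf m(\mathbf p)=\mathcal O_{E_\mathbf p}\bigl(m_0H|_{E_\mathbf p}-\sum m_ip_i\bigr)\in\mathrm{Pic}^0(E_\mathbf p)
\]
varies with $\mathbf p$ and, for appropriate $\mathbf m$, defines a non-constant morphism from $W$ to the relative Jacobian $\mathcal J$ of the family of cubics. For each $n\ge 1$, the locus $\{\mathbf p\in W : \mathcal L_\mathbf m(\mathbf p)^{\otimes n}\cong\mathcal O_{E_\mathbf p}\}$ is a proper closed subvariety of codimension one in $W$; its irreducible components are taken as prime divisors $W_n$. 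As $\mathbf m$ and $n$ range, one obtains a countable family $W_1,W_2,\ldots$ of such prime divisors. Their union is Zariski dense in $W$: torsion points are dense in $\mathrm{Pic}^0(E_\mathbf p)$ while a very general $\mathbf p$ maps to a non-torsion point.

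Following the construction in \cite{1}, I would take $C$ to be a small real perturbation of an integral class $D=aH-\sum b_iE_i$ with $\sum b_i=3a$ (so that $D\cdot\tilde E=0$), with coefficients chosen so that on the very general fiber $X_\mathbf p$ the class $C_\mathbf p$ is nef with $C_\mathbf p^2=0$; then $\mathrm{vol}(C_\mathbf p)=0$ and $C_\mathbf p$ is not big. At $\mathbf p\in W_n$, the torsion condition combined with the restriction exact sequence
\[
0\to\mathcal O_{X_\mathbf p}(kD-\tilde E)\to\mathcal O_{X_\mathbf p}(kD)\to\mathcal O_{E_\mathbf p}(kD|_{E_\mathbf p})\to 0
\]
produces an additional effective divisor $F_{n,\mathbf p}$ on $X_\mathbf p$ that is not effective on the generic fiber. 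A direct intersection computation shows $C_\mathbf p\cdot F_{n,\mathbf p}<0$, so the Zariski decomposition has $F_{n,\mathbf p}$ in its negative support; writing $C_\mathbf p=P+tF_{n,\mathbf p}$ with $t>0$ and $P$ nef with $P\cdot F_{n,\mathbf p}=0$, we obtain $P^2=-t\,C_\mathbf p\cdot F_{n,\mathbf p}>0$, so $\mathrm{vol}(C_\mathbf p)=P^2>0$ and $C_\mathbf p$ is big for general $\mathbf p\in W_n$.

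The main obstacle is to carry out the construction of \cite{1} uniformly over the family $\mathcal X/W$: one must choose the coefficients of $C$ so that the same $\mathbb R$-divisor class produces a new negative component in the Zariski decomposition exactly at each $\mathbf p\in W_n$ and not elsewhere. This hinges on a precise translation of torsion relations in $\mathrm{Pic}^0(E_\mathbf p)$ into effectivity statements on $X_\mathbf p$ via the restriction sequence above, and on ensuring that no further unexpected negative components affect the Zariski decomposition on the very general fiber.
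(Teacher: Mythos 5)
Your proposal does not supply the two ingredients that carry all the weight in this theorem, and the mechanism you propose for the jumping loci would not work as stated. First, the class $C$: you posit ``a small real perturbation of an integral class $D$'' that is nef with $C_\mathbf{p}^2=0$ on the very general fiber over $W$, but you never construct it nor prove this nefness (very-generality must be taken inside $W$, not $\Sigma$, so this is a genuine statement to prove). In the paper $C$ is the eigenvector $C_\lambda=H-\sum r_iE_i$ of the map $T$ induced by Lesieutre's self-map $\rho$ (Cremona at three of the points followed by a permutation), and it is exactly this dynamical origin that yields simultaneously $C_\lambda^2=0$, $C_\lambda\cdot K=0$, nefness on very general fibers (the analogue of \cite{1} Lemma 3.4 over $W$), and the crucial numerical inequality $r_1+r_2+r_3>1$. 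Second, bigness on the special divisors: your $W_n$ are $n$-torsion loci of a fixed $\mathcal L_\mathbf{m}$ in the relative Jacobian, and you claim the resulting extra effective divisor $F_{n,\mathbf p}$ satisfies $C_\mathbf p\cdot F_{n,\mathbf p}<0$. There is no reason for this: triviality of $(k\mathbf m)|_{E_\mathbf p}$ makes classes whose \emph{restriction} to the cubic is trivial become effective, and such classes intersect a class $C\approx D$ with $D\cdot\tilde E=0$, $D^2=0$ in a number near $0$, not a negative one; moreover for $n\ge 2$ the trivially-restricting class is $n\mathbf m$, whose self-intersection is $\le -2n^2$, so by adjunction (with intersection $0$ against $K$) it cannot be an irreducible curve, and the negative part of the Zariski decomposition of $C$ need not change at all. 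You also tacitly assume $C_\mathbf p$ is pseudo-effective at the special fibers (otherwise there is no Zariski decomposition and the volume is $0$); this is precisely what the paper's main technical lemma establishes, by showing $C_{\lambda,\mathbf p}-\beta\overline{\ell}$ is nef for general $\mathbf p\in W_0$ (adjunction plus Cauchy--Schwarz reduces to degrees $d\le 6$, checked by the explicit table), whence $C_{\lambda,\mathbf p}$ is big with positive volume.

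Finally, even granting bigness over one special divisor, your proposal has no device to produce it over \emph{infinitely many} distinct prime divisors for a single fixed class $C$. This is where the paper again uses the self-map: $W_0$ is (an irreducible component of) the locus in $W$ where $p_1,p_2,p_3$ are colinear, i.e.\ where the $(-2)$-class $\overline{\ell}=H-E_1-E_2-E_3$ becomes an irreducible curve with $C_\lambda\cdot\overline{\ell}=1-(r_1+r_2+r_3)<0$; then $W_{n+1}$ is the strict transform of $W_n$ under $\rho|_W$, a separate lemma (unique rational curve of self-intersection $\le -2$ on suitably chosen fibers, using the genericity hypothesis on $p_4,\dots,p_{10}$) shows the $W_n$ are pairwise distinct prime divisors, and the equivariance $T(C_\lambda)=\lambda C_\lambda$ gives the functional equation ${\rm vol}_{X_\mathbf p}(C_{\lambda,\mathbf p})=\lambda^2\,{\rm vol}_{X_{\rho(\mathbf p)}}(C_{\lambda,\rho(\mathbf p)})$, which propagates positivity of the volume from $W_0$ to every $W_n$. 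Your closing paragraph correctly identifies this uniformity as ``the main obstacle,'' but that obstacle is the theorem; without the eigenvector class and the $\rho$-equivariance (or some substitute), the torsion-locus plan does not close the gap.
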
 
 
 By Theorem  \ref {paper1}, the volume of $C_{\mathbf{p}}$ is zero for very general $\mathbf{p}$, but the volume is positive for those general $\mathbf{p}$ in $W_n$. Hence it could serve as the example we want. 
 
 Since we only work on smooth surfaces, $N^1(X)$ is isomorphic to $N_1(X)$ which denotes the group of curves of coefficients in $\mathbb{R}$ modulo numerical equivalence. We use the same notation to present their elements. We review the construction in \cite{1} in the next section and then finish the proof of Theorem \ref{paper1}.

 \section{Lesieutre's construction}
We first recall John Lesieutre's construction(\cite{1} section 3) of a family of $\mathbb{R}$-divisors which is nef for very general fiber, but fails to be nef over countably many prime divisors in the base. Consider the family over $\Sigma$  whose fiber over $\mathbf{p} \in \Sigma $ is the blow-up of $ \mathbb{P}^2$ at the corresponding ten points. Then for any fiber $X=X_\mathbf{p} $, there are decompositions $N^1(X)=\mathbb{R}H\bigoplus_{i=1}^{10}\mathbb{R}E_i$. $H$ is the pullback of the hyperplane class on $\mathbb{P}^2$, and $E_i$ are the exceptional divisors.

John Lesieutre's idea is that he considers the birational map $\rho$ from $\Sigma$ to itself, given by $(p_1,  \cdots , p_{10}) \longmapsto (p_8, p_9, p_{10}, {\rm Cr}(p_1), \cdots, {\rm Cr}(p_7))$, where ${\rm Cr}$ is the Cremona transformation centered at $p_8, p_9, p_{10}$. Note that for general $\mathbf{p}\in\Sigma$, $\rho$ induces isomorphism between $X_\mathbf{p} $ and $X_{\rho(\mathbf{p} )}$, and also isomorphism from $N^1(X_\mathbf{p})$ to $N^1(X_{\rho(\mathbf{p})})$. Under the basis ${H,E_1,\cdots,E_{10}}$, it could be written as:
\[T = \mathbf{A} \cdot \mathbf{B}\]
\[\mathbf{A} = 
\left(\begin{array}{c|c}
M & 0\\
\hline
0 & I_7
\end{array}\right)
\]
$M$ is the transform matrix of Cremona transformation and  $\mathbf{B}$ is the permutation matrix for (1)(9, 10, 11, 2, 3, 4, 5, 6, 7, 8). 

It can be checked that $T$ has a unique eigenvalue $\lambda$ of magnitude greater than $1$. When its corresponding eigenvector $C_{\lambda}$ is written as $H-\sum_{i=1}^{10}r_iE_i$, we have $r_1+r_2+r_3>1$. \cite{1} Lemma 3.1 shows that for very general $\mathbf{p}\in\Sigma$, $C_{\lambda,\mathbf{p}}$ is nef on $X_\mathbf{p}$. Moreover, if we define $V_0\subset\Sigma$ to be the set of $\mathbf{p}$ for which first three points are colinear, then since the inequality $r_1+r_2+r_3>1$, $C_{\lambda}$ is not nef on $V_0$. Now, we define $V_{n+1}$ as the strict transform of $V_n$ under $\rho$. The discussion above shows that over $V_n$, $C_{\lambda}$ can not be nef. Since that $V_m$ and $V_n$ are distinct (\cite{1} Lemma 3.3) if $m\neq n$, $C_{\lambda}$ is not nef on countably prime divisors in $\Sigma$.

We list two important facts which we will use later.
\begin{fact}\label{fact1}
$C_{\lambda,\mathbf{p}}$'s self-intersection number is $0$, i.e. $C_{\lambda,\mathbf{p}}\cdot C_{\lambda,\mathbf{p}}=0$
\end{fact}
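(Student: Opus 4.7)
The plan is to exploit the fact that $T$ is induced by a birational self-map $\rho$ of the base that, over general $\mathbf{p}$, restricts to an \emph{isomorphism} of surfaces $X_\mathbf{p} \xrightarrow{\sim} X_{\rho(\mathbf{p})}$. Pullback along such an isomorphism preserves the intersection pairing, so $T$ is an isometry with respect to the standard intersection form on $N^1$. The fact then drops out of a two-line algebraic argument.

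First I would make the isometry claim precise. In the basis $(H, E_1, \ldots, E_{10})$ the intersection form is $\mathrm{diag}(1, -1, \ldots, -1)$. The permutation block $\mathbf{B}$ simply reshuffles the $E_i$'s and fixes $H$, so it is obviously an isometry. The block $\mathbf{A}$ is built from the Cremona transformation centered at $p_8, p_9, p_{10}$ (acting on the span of $H, E_8, E_9, E_{10}$) together with the identity on the remaining $E_i$; the standard Cremona matrix $M$ is a well-known isometry of the diagonal form, so $\mathbf{A}$ is too. Hence $T = \mathbf{A}\cdot\mathbf{B}$ preserves the intersection pairing. Alternatively, one can appeal directly to the fact that the Cremona map lifts to a regular isomorphism after blowing up the three base points, which is how the geometric statement $X_\mathbf{p}\simeq X_{\rho(\mathbf{p})}$ is obtained in \cite{1}.

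Next I would apply this invariance to the eigenvector. From $T(C_\lambda) = \lambda C_\lambda$ and the $T$-invariance of the pairing,
\[
C_\lambda \cdot C_\lambda \;=\; (TC_\lambda)\cdot(TC_\lambda) \;=\; \lambda^{2}\,(C_\lambda \cdot C_\lambda),
\]
so $(\lambda^{2}-1)(C_\lambda\cdot C_\lambda) = 0$. Since $\lambda$ is the \emph{unique} eigenvalue of magnitude greater than $1$, it must be real (otherwise $\bar\lambda$ would be a second such eigenvalue), and $|\lambda|>1$ forces $\lambda^{2}\neq 1$. Therefore $C_\lambda \cdot C_\lambda = 0$, which in coordinates is the identity $1 = \sum_{i=1}^{10} r_i^{2}$.

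The only mildly nontrivial point is the reality/uniqueness observation for $\lambda$; everything else is formal, so I do not expect a serious obstacle. If one prefers to avoid reasoning about $\rho$ geometrically, the whole proof can be carried out by a direct matrix check that $T^{\top} J T = J$ for $J = \mathrm{diag}(1,-1,\ldots,-1)$, which reduces the fact to the single bilinear identity displayed above.
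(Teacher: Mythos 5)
Your proposal is correct and follows essentially the same route as the paper: both use that $T$ comes from the isomorphism $X_\mathbf{p}\simeq X_{\rho(\mathbf{p})}$ (hence is an isometry of the intersection form) and then deduce $C_\lambda\cdot C_\lambda=\lambda^2\,C_\lambda\cdot C_\lambda$ with $\lambda>1$, forcing the self-intersection to vanish. Your extra remarks on the reality of $\lambda$ and the explicit matrix verification are harmless additions but not a different argument.
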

\begin{fact}\label{fact2}
$C_{\lambda,\mathbf{p}}\cdot K_{X_{\mathbf{p}}}=0$, where $K_{X_{\mathbf{p}}}=-3H+\sum_{i=1}^{10}E_i$ is the canonical divisor on $X_\mathbf{p}$.
\end{fact}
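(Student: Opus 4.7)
The identity $C_{\lambda,\mathbf{p}}\cdot K_{X_{\mathbf{p}}}=0$ amounts, in coordinates, to the claim that $\sum_{i=1}^{10} r_i = 3$, but rather than extract this sum from the eigenvector equation by hand I would deduce the identity from the structural features of $T$. The plan is to combine two observations: (i) $T$ preserves the intersection pairing on $N^1$, and (ii) $K_{X_{\mathbf{p}}}$ is itself a $T$-eigenvector with eigenvalue $1$. Granting both, one has
\[
C_{\lambda,\mathbf{p}}\cdot K_{X_{\mathbf{p}}}=T(C_{\lambda,\mathbf{p}})\cdot T(K_{X_{\mathbf{p}}})=\lambda\cdot 1\cdot\bigl(C_{\lambda,\mathbf{p}}\cdot K_{X_{\mathbf{p}}}\bigr),
\]
and since $|\lambda|>1$ forces $\lambda\neq 1$, the intersection number must vanish.

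For observation (i), for general $\mathbf{p}\in\Sigma$ the birational map $\rho$ lifts to an honest isomorphism $X_\mathbf{p}\xrightarrow{\sim} X_{\rho(\mathbf{p})}$, and $T$ is by construction the pullback on $N^1$ expressed in the bases $\{H,E_1,\ldots,E_{10}\}$ on the two sides. Both bases have intersection matrix $\operatorname{diag}(1,-1,\ldots,-1)$, and any pullback along an isomorphism of smooth surfaces is an isometry of intersection pairings; hence $T$ preserves this Lorentzian form.

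For observation (ii), the canonical class is intrinsic to the variety, so the isomorphism $X_\mathbf{p}\simeq X_{\rho(\mathbf{p})}$ sends $K_{X_\mathbf{p}}$ to $K_{X_{\rho(\mathbf{p})}}$. Because the canonical class has the same coordinate expression $-3H+\sum_{i=1}^{10}E_i$ in the two bases, this identification reads $T(K_{X_\mathbf{p}})=K_{X_\mathbf{p}}$, that is, $K_{X_\mathbf{p}}$ is a $1$-eigenvector of $T$. Substituting into the displayed equation above finishes the proof.

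The only point that needs a moment's justification is that $\rho$ truly induces an isomorphism of blow-ups for generic $\mathbf{p}$, rather than only a birational map; this is the standard feature of the Cremona transformation based at three points (it becomes a regular automorphism after blowing up those points, interchanging the three exceptional divisors with the strict transforms of the three lines through pairs), combined with the relabeling $\mathbf{B}$. This is already implicit in the setup of $T$ recalled from \cite{1}, so no additional work is needed. A more pedestrian alternative would be to compute $C_{\lambda,\mathbf{p}}\cdot K_{X_{\mathbf{p}}}=-3+\sum r_i$ in coordinates and extract $\sum r_i=3$ directly from $T C_\lambda=\lambda C_\lambda$, but the conceptual argument above bypasses any case analysis on the matrices $\mathbf{A}$ and $\mathbf{B}$.
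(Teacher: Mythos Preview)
Your argument is correct and matches the paper's own proof essentially line for line: both use that $T$ is an isometry of the intersection form (since $\rho$ induces an isomorphism $X_\mathbf{p}\simeq X_{\rho(\mathbf{p})}$), that $K$ is $T$-invariant, and then conclude from $C_\lambda\cdot K=\lambda\,C_\lambda\cdot K$ with $\lambda\neq 1$. Your write-up is somewhat more detailed in justifying (i) and (ii), but the strategy and the key computation are identical.
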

\begin{proof} Since $\rho$ induces isomorphism between $X_\mathbf{p}$ and $X_{\rho(\mathbf{p})}$, thus also $N^1(X_\mathbf{p})$ and $N^1(X_{\rho(\mathbf{p})})$, and the intersection form on them. That is to say:
\[
C_{\lambda,\mathbf{p}}\cdot C_{\lambda,\mathbf{p}}=T(C_{\lambda,\mathbf{p}})\cdot T(C_{\lambda,\mathbf{p}})
=\lambda^2C_{\lambda,\mathbf{p}}\cdot C_{\lambda,\mathbf{p}}
\]  
Our first conclusion comes from the observation that $\lambda>1$.

Notice that $T(K_{X_{\mathbf{p}}})=K_{\rho(\mathbf{p})}$, so $-3H+\sum_{i=1}^{10}E_i$ is invariant under $T$. Thus
\[C_{\lambda,\mathbf{p}}\cdot K_{X_{\mathbf{p}}}=T(C_{\lambda,\mathbf{p}})\cdot T(K_{X_{\mathbf{p}}})=\lambda C_{\lambda,\mathbf{p}}\cdot K_{X_{\mathbf{p}}}
\]
By $\lambda>1$, $C_{\lambda,\mathbf{p}}\cdot K_{X_{\mathbf{p}}}$ has to be zero.
\end{proof}

\section{Proof of Theorem \ref{paper1}}

Let $W'_0$ be the intersection of $W$ and $V_0$. Fix a point $\mathbf{p_0}=(p_1,\cdots,p_{10})\in W'_0$ by choosing $p_1,\cdots,p_{10}$ lie on an elliptic curve $E$, and first three points lie on a line $l$ which meets $E$ transversely, and $p_4,\cdots,p_{10}$ have the property that if $3d-\Sigma_{i=1}^{10}m_i=0$, the class $dl|_E-\Sigma_{i=1}^{10}m_ip_i$ is not linearly equivalently to $0$ on $E$ unless $m_4=\cdots=m_{10}=0$. This condition will be met for very general points in $W'_0$. We define $W_0$ as the irreducible component of $W'_0$ containing $\mathbf{p_0}$.

Define inductively $W_{n+1}$ as the strict transform of $W_n$ under $\rho|_W$. We want to show that $C_{\lambda,\mathbf{p}}$ has zero volume for very general point $\mathbf{p}\in W$, while for general $\mathbf{p}\in W_n$, the volume is non-trivial. But before that, we need to prove that $W_n$ is well-defined.

\begin{lem}Each $W_n$ is a well-defined prime divisor in $W$, and $W_m$ and $W_n$ is distinct if $n\ne m$.
\end{lem}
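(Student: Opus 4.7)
The plan is to first verify that $\rho$ restricts to a birational self-map of $W$, then deduce that each $W_n$ is a prime divisor, and finally establish distinctness using the defining property of $\mathbf{p_0}$.

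First I would check that $\rho|_W : W \dashrightarrow W$ is birational. The isomorphism $X_{\mathbf{p}}\cong X_{\rho(\mathbf{p})}$ preserves $K_X$ (Fact \ref{fact2}), and therefore preserves effectivity of $-K_X$; since $\mathbf{p}\in W$ precisely when the ten points lie on a cubic, equivalently when $-K_{X_{\mathbf{p}}}$ is effective, we get $\rho(W)\subset W$ and symmetrically for $\rho^{-1}$. Because $W\not\subset V_0$ (one can place ten points on an elliptic curve with the first three in general position), $W\cap V_0$ is a proper closed subvariety of pure codimension one in $W$, so $W_0$ is a prime divisor. Inductively, each $W_{n+1}$ is the strict transform of a prime divisor under the birational self-map $\rho|_W$, hence again a prime divisor.

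For distinctness, it suffices to show $W_k\ne W_0$ for every $k\ge 1$: the case $W_m=W_n$ with $m>n$ then reduces to $W_{m-n}=W_0$ by applying $(\rho|_W)^{-n}$. Suppose $W_k=W_0$ for some $k\ge 1$. Then for generic $\mathbf{p}\in W_0$, the image $\rho^k(\mathbf{p})$ again lies in $W_0\subset V_0$, so the class $H-E_1-E_2-E_3$ is effective on $X_{\rho^k(\mathbf{p})}$. Via the isomorphism $X_{\mathbf{p}}\cong X_{\rho^k(\mathbf{p})}$, it pulls back to an effective integral class $D=T^{-k}(H-E_1-E_2-E_3)=aH-\sum b_iE_i$ on $X_{\mathbf{p}}$. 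Because $T$ preserves $K_X$ and the intersection form, $3a-\sum b_i=0$ and $D^2=-2$. Restricting to the anti-canonical elliptic curve $\tilde E\cong E$ of $X_{\mathbf{p}}$: on $X_{\rho^k(\mathbf{p})}$ the proper transform of the line through the collinear first triple gives an effective representative of $H-E_1-E_2-E_3$ disjoint from $\tilde E$, so transporting via the isomorphism shows $D|_{\tilde E}$ is linearly equivalent to $0$ on $E$. The property defining $\mathbf{p_0}$ is open (for each relevant tuple of coefficients) and holds for generic $\mathbf{p}\in W_0$; it forces $b_4=\cdots=b_{10}=0$.

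It then remains to check that the only effective $D=aH-\sum_{i=1}^3 b_iE_i$ with $3a=b_1+b_2+b_3$, $a^2-\sum_{i=1}^3 b_i^2=-2$, and $D|_{\tilde E}\sim 0$ on $E$ is $D=H-E_1-E_2-E_3$. Cauchy--Schwarz gives $|a|\le 1$; the case $a=-1$ is ruled out by $D\cdot H\ge 0$ (with $H$ nef), while for $a=0$ one gets $D=\pm(E_i-E_j)$ for distinct $i,j$, whence $D|_{\tilde E}\sim 0$ becomes $p_i\sim p_j$ on $E$, impossible for distinct points on an elliptic curve. Hence $D=H-E_1-E_2-E_3$ is fixed by $T^{-k}$; pairing with the Perron eigenvector $C_\lambda$ gives $\lambda^k(1-r_1-r_2-r_3)=1-r_1-r_2-r_3\ne 0$, forcing $\lambda^k=1$, contradicting $\lambda>1$ and $k\ge 1$. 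The main obstacle is the restriction argument: one must carefully identify the anti-canonical elliptic curves on $X_{\mathbf{p}}$ and $X_{\rho^k(\mathbf{p})}$ under the birational isomorphism and verify that the restriction of $D$ is indeed linearly trivial on $E$.
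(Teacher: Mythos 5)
Your inductive step ``each $W_{n+1}$ is the strict transform of a prime divisor under the birational self-map $\rho|_W$, hence again a prime divisor'' is exactly where the real content of the well-definedness claim lies, and as stated it is not justified: a birational self-map may contract a divisor or contain it in its indeterminacy locus, in which case the strict transform is undefined or fails to be a divisor. The relevant danger here is the divisor $L\subset W$ where the \emph{last} three points are collinear: there the Cremona transformation centered at $p_8,p_9,p_{10}$ degenerates, so $\rho$ does not induce the isomorphisms of blow-ups (nor even a well-behaved map) along $L$. The paper's proof is organized precisely around excluding this: it produces points $\mathbf{p_n}=\rho^n(\mathbf{p_0})\in W_n\setminus L$ by showing that $X_{\mathbf{p_n}}$ carries a \emph{unique} rational curve of self-intersection $\le -2$, lying in the class $T^n(\bar l)$, and that these classes are pairwise distinct (and distinct from $H-E_8-E_9-E_{10}$). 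Your proposal never shows $W_n\not\subset L$, and this gap propagates: the isomorphisms $X_{\mathbf{p}}\cong X_{\rho^k(\mathbf{p})}$ you invoke for generic $\mathbf{p}\in W_0$ in the distinctness part, as well as the reduction of $W_m=W_n$ to $W_{m-n}=W_0$ via $(\rho|_W)^{-n}$, all presuppose that a general point of each intermediate $W_j$ avoids the degeneracy locus of $\rho$, which is exactly the missing statement.

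Granting well-definedness, your distinctness argument is correct and takes a genuinely different route from the paper's. The paper exhibits points lying on $W_n$ and on no other $W_m$, using the uniqueness of the $(-2)$ rational curve together with the fact that no eigenvalue of $T$ other than $1$ is a root of unity; you instead assume $W_k=W_0$, transport the effective class $H-E_1-E_2-E_3$ back by $T^{-k}$, use its disjointness from the anticanonical curve plus the genericity condition on the points in $\operatorname{Pic}^0(E)$ to force $T^{-k}(H-E_1-E_2-E_3)=H-E_1-E_2-E_3$, and then derive a contradiction by pairing with $C_\lambda$, using $r_1+r_2+r_3>1$ and $\lambda>1$. That pairing trick neatly replaces the paper's explicit root-of-unity computation, and the small numerical classification ($|a|\le 1$, ruling out $a=-1,0$) is fine. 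So the second half of your proposal is a valid and arguably cleaner alternative; what it does not do is prove the part of the lemma that requires work, namely that each $W_n$ meets the locus where $\rho$ is honestly defined, so that the next strict transform (and your chain of isomorphisms) exists.
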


\begin{proof}The proof is almost the same as \cite{1} Lemma 3.3. Let $L$ be the divisor of $W$ where last three points are colinear. Recalling $\rho$'s definition, the Cremona transformation is centered at last three points. So we have to show $W_n$ are not contained in $L$. We prove the lemma by contructing a sequence of points $\mathbf{p_n}$, such that $\mathbf{p_n}\in W_n\setminus L$, but does not belong to $W_m,~m\ne n$.

Define inductively $\mathbf{p_{n+1}}$ as $\rho(\mathbf{p_n})$. Denote the strict transform of $l$ in $X_{\mathbf{p_0}}$ as $\bar{l}$, and also its class. It is clear that $X_{\mathbf{p_n}}$ has a rational curve in the class $T^n(\bar{l})$, and now we argue that this is the unique rational curve of self-intersection less than or equal to $-2$ on $X_{\mathbf{p_n}}$.

It suffices to prove the case $n=0$ since $\rho$ gives an isomorphism between $X_{\mathbf{p_{n+1}}}$ and $X_{\rho(\mathbf{p_n})}$. Suppose that $C\sim dH-\Sigma_{i=1}^{10}m_iE_i$ is a rational curve on $X_{\mathbf{p_0}}$ with $C^2\le -2$. Then by adjunction formula, $-2=C^2+C\cdot K_{X_{\mathbf{p_0}}}$. So $C\cdot K_{X_{\mathbf{p_0}}}\ge 0$. However, $-K_{X_{\mathbf{p_0}}}$ is the class of the strict transform of $E$, hence $C\cdot K_{X_{\mathbf{p_0}}}\le 0$. Thus $C\cdot K_{X_{\mathbf{p_0}}}$ must be $0$, i.e. $3d-\Sigma_{i=1}^{10}m_i=0$. The condition on the points imply immediately that $C$ has to be $\bar{l}$.

Caculating explicitly the eigenvalue of $T$, we find that there is no eigenvalue other than $1$ which is a root of unity. And it is easy to see $\bar{l}$ is not the eigenvector of $T$. Therefore the class $T^n(\bar{l})$ and $T^m(\bar{l})$ (view as an vector in $\mathbb{R}^{11}$) are distinct, if $n\ne m$. Since $X_{\mathbf{p}_n}$ has a unique rational curve with self-intersection less than $-2$, which lies in the class $T^n(\bar{l})$, $\mathbf{p_n}$ cannot belong to $W_m,~m\ne n$, and also $L$.
\end{proof}

By the same method as \cite{1} Lemma 3.4., it can be shown that $C_{\lambda,\mathbf{p}}$ is nef for very general $\mathbf{p}\in W$. Thus for these $\mathbf{p}$, ${\rm vol}_{X_\mathbf{p}}(C_{\lambda,\mathbf{p}})=C_{\lambda,\mathbf{p}}\cdot C_{\lambda,\mathbf{p}}=0$, thanks to the Fact 2 above.

The following lemma tells us that it suffices to prove ${\rm vol}_{X_\mathbf{p}}(C_{\lambda,\mathbf{p}})$ is not zero for general $\mathbf{p}\in W_0$.
\begin{lem}${\rm vol}_{X_\mathbf{p}}(C_{\lambda,\mathbf{p}})=\lambda^2 {\rm vol}_{X_\rho(\mathbf{p})}(C_{\lambda,\rho(\mathbf{p})})$, for any $\mathbf{p}\in\Sigma$ whose last three points are not colinear.
\end{lem}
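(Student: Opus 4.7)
The strategy is to exploit the fact that, under the stated hypothesis, $\rho$ induces a genuine biregular isomorphism between the two fibers, so that volumes can be transported directly between them; the factor $\lambda^{2}$ will then come from the eigenvector relation $T(C_{\lambda})=\lambda C_{\lambda}$ combined with the degree-two homogeneity of the volume function on surfaces.

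The argument proceeds in three steps. First, I would check that when $p_{8},p_{9},p_{10}$ are not colinear, the Cremona transformation centered at these three points lifts to an automorphism of the blow-up of $\mathbb{P}^{2}$ at $p_{8},p_{9},p_{10}$; blowing up the remaining seven points on source and target then extends this to a biregular isomorphism $\phi : X_{\mathbf{p}} \to X_{\rho(\mathbf{p})}$ whose induced action on $N^{1}$, expressed in the basis $H,E_{1},\ldots,E_{10}$, is exactly the matrix $T$ from Section 1. Second, I would invoke two standard facts about volumes: the volume function is invariant under pullback (equivalently pushforward) by an isomorphism of smooth projective varieties---for line bundles this is immediate from the definition in terms of $h^{0}$, and extends to $\mathbb{R}$-divisors via the continuous extension of $\mathrm{vol}$ to $N^{1}(X)_{\mathbb{R}}$---and volume is homogeneous of degree $n=2$ on surfaces, so that $\mathrm{vol}(tD)=t^{2}\mathrm{vol}(D)$ for $t\ge 0$. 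Third, I would combine these: the eigenvector equation $TC_{\lambda}=\lambda C_{\lambda}$ translates into the identity $\phi_{*}C_{\lambda,\mathbf{p}}=\lambda C_{\lambda,\rho(\mathbf{p})}$ in $N^{1}(X_{\rho(\mathbf{p})})$, and the desired equality follows in one line:
\[
\mathrm{vol}_{X_{\mathbf{p}}}(C_{\lambda,\mathbf{p}})
=\mathrm{vol}_{X_{\rho(\mathbf{p})}}(\phi_{*}C_{\lambda,\mathbf{p}})
=\mathrm{vol}_{X_{\rho(\mathbf{p})}}(\lambda C_{\lambda,\rho(\mathbf{p})})
=\lambda^{2}\,\mathrm{vol}_{X_{\rho(\mathbf{p})}}(C_{\lambda,\rho(\mathbf{p})}).
\]

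The main obstacle I expect is purely one of bookkeeping: one must identify correctly which of $\phi^{*}$ or $\phi_{*}$ is represented by the matrix $T$, so that $\lambda$ appears with the correct sign of exponent and combines with the degree-two homogeneity to produce $\lambda^{+2}$ rather than $\lambda^{-2}$. Once this convention is pinned down, the argument uses only elementary properties of the volume function---continuity on $N^{1}(X)_{\mathbb{R}}$, homogeneity, and invariance under isomorphism---and there are no further difficulties.
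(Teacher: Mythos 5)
Your proposal is correct and is essentially the paper's own argument: transport the class through the isomorphism $X_{\mathbf{p}}\cong X_{\rho(\mathbf{p})}$ induced by $\rho$ (valid precisely because the last three points are not colinear), use the eigenvector relation to identify the image with $\lambda C_{\lambda,\rho(\mathbf{p})}$, and conclude by invariance of volume under isomorphism together with degree-two homogeneity. Your extra care about whether $T$ represents $\phi_{*}$ or $\phi^{*}$ is reasonable but, with the paper's conventions, resolves exactly as in your displayed one-line computation.
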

\begin{proof}
$\rho$ induces isomorphism between $X_\mathbf{p}$ and $X_{\rho(\mathbf{p})}$, hence 
\[
{\rm vol}_{X_\mathbf{p}}(C_{\lambda,\mathbf{p}})={\rm vol}_{X_\rho(\mathbf{p})}(T(C_{\lambda,\mathbf{p}}))={\rm vol}_{X_\rho(\mathbf{p})}(\lambda C_{\lambda,\rho(\mathbf{p})})
=\lambda^2 {\rm vol}_{X_\rho(\mathbf{p})}(C_{\lambda,\rho(\mathbf{p})})
\]
\end{proof}

We use $\overline{\ell}$ to denote the divisor $H-E_1-E_2-E_3$, then it is effective on $X_\mathbf{p},~\mathbf{p}\in W_0$.
\begin{lem}
$C_{\lambda, \mathbf{p}} - \beta\overline{\ell}$ is nef for general $\mathbf{p} \in W_0$, where $\beta=\frac{C_{\lambda,\mathbf{p}}\cdot\overline{\ell}}{\overline{\ell}\cdot\overline{\ell}}=\frac{1}{2}(r_1+r_2+r_3-1)$.
 \end{lem}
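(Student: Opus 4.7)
The plan is to identify $C' := C_{\lambda,\mathbf{p}} - \beta\overline{\ell}$ with the positive part of the Zariski decomposition of $C_{\lambda,\mathbf{p}}$ on $X_{\mathbf{p}}$. First I would verify the formula for $\beta$ by the direct intersection computation $\overline{\ell}^2 = 1 - 3 = -2$ and $C_{\lambda,\mathbf{p}}\cdot\overline{\ell} = 1 - r_1 - r_2 - r_3$, which is negative since $r_1+r_2+r_3>1$; this gives $\beta = \tfrac{1}{2}(r_1+r_2+r_3-1)>0$, and by construction $C'\cdot\overline{\ell}=0$ and $(C')^2 = 2\beta^2>0$.

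Next I would establish that $C_{\lambda,\mathbf{p}}$ is pseudo-effective on $X_{\mathbf{p}}$. Since $C_{\lambda,\mathbf{q}}$ is nef (hence pseudo-effective) on $X_{\mathbf{q}}$ for very general $\mathbf{q}\in W$ and the pseudo-effective cone is closed under specialization in the family $\mathcal{X}|_W\to W$, the class remains pseudo-effective as $\mathbf{q}$ specializes to $\mathbf{p}$. Applying the Zariski decomposition theorem on the smooth projective surface $X_{\mathbf{p}}$ then gives a unique decomposition $C_{\lambda,\mathbf{p}} = P + N$ with $P$ nef and $N = \sum a_i N_i$ effective, the intersection matrix $(N_i\cdot N_j)$ negative definite, and $P\cdot N_i = 0$ for each $i$.

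The main task is to identify $N = \beta\overline{\ell}$. On the one hand, $\overline{\ell}$ must appear in the support of $N$: since $P$ is nef, $P\cdot\overline{\ell}\geq 0$, so $N\cdot\overline{\ell} = C_{\lambda,\mathbf{p}}\cdot\overline{\ell} - P\cdot\overline{\ell} \leq C_{\lambda,\mathbf{p}}\cdot\overline{\ell} < 0$, and only a component of $N$ can produce a negative intersection with $\overline{\ell}$. On the other hand, no other irreducible curve enters the support of $N$: by the adjunction plus genericity argument used in the proof of the previous lemma, $\overline{\ell}$ is the unique irreducible curve on $X_{\mathbf{p}}$ with self-intersection $\leq -2$, and any $(-1)$-curve $N_i$ tentatively in the support would deform to a $(-1)$-curve on a nearby very general fiber where $C_\lambda$ is nef, forcing $C_{\lambda,\mathbf{p}}\cdot N_i\geq 0$ and, combined with the orthogonality $P\cdot N_i = 0$ and negative definiteness, the coefficient $a_i = 0$. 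Once only $\overline{\ell}$ remains, the relation $P\cdot\overline{\ell}=0$ forces $N = \beta\overline{\ell}$, and so $P = C'$ is nef.

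The main obstacle is the exclusion of $(-1)$-curves from the support of $N$. Concretely, one must establish inequalities such as $r_i \geq \beta$ for $i=1,2,3$ (ensuring $E_i$ does not appear in $N$) and analogous bounds for $(-1)$-curves like $H - E_i - E_j$ that meet $\overline{\ell}$ positively. These reduce to explicit numerical inequalities on the eigenvector $(r_1,\ldots,r_{10})$ of $T$, consistent with $\sum r_i = 3$ (from Fact~\ref{fact2}) and $\sum r_i^2 = 1$ (from Fact~\ref{fact1}), and can be extracted from the nefness of $C_\lambda$ on nearby very general fibers of $\mathcal{X}\to\Sigma$ combined with the eigenvector structure of $T$.
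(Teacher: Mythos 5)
Your framing via Zariski decomposition is coherent, and the easy parts are correct: the computation $\overline{\ell}^{\,2}=-2$, $C_{\lambda,\mathbf{p}}\cdot\overline{\ell}=1-r_1-r_2-r_3<0$, hence $\beta=\tfrac12(r_1+r_2+r_3-1)$, $(C_{\lambda,\mathbf{p}}-\beta\overline{\ell})\cdot\overline{\ell}=0$, $(C_{\lambda,\mathbf{p}}-\beta\overline{\ell})^2=2\beta^2$; pseudo-effectivity of $C_{\lambda,\mathbf{p}}$ by specialization; and the fact that $\overline{\ell}$ must lie in the support of the negative part $N$. But the decisive step --- excluding every other curve from $\mathrm{supp}(N)$ --- is exactly where the lemma lives, and the mechanism you propose for it is not valid. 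From $C_{\lambda,\mathbf{p}}\cdot N_i\geq 0$ (obtained by deforming a $(-1)$-curve to nearby very general fibers), together with $P\cdot N_i=0$ and negative definiteness, one cannot conclude $a_i=0$: components of a Zariski negative part need not meet the divisor nonpositively, let alone negatively. For instance, if $F_1$ is a $(-1)$-curve and $F_2$ a $(-2)$-curve with $F_1\cdot F_2=1$, then $D=F_1+2F_2$ has negative definite support, so $N=D$ and $P=0$, yet $D\cdot F_1=1\geq 0$ while $F_1$ occurs in $N$ with positive coefficient. In the present situation a $(-1)$-curve meeting $\overline{\ell}$ could a priori enter $N$ together with $\overline{\ell}$ in exactly this pattern, and nothing in your argument rules it out; note also that besides rational $(-1)$-curves you must handle the strict transform of the elliptic curve, which is an irreducible curve of genus $1$ with self-intersection $-1$, so it is not covered by the dichotomy ``$\overline{\ell}$ or a $(-1)$-curve'' either.

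More structurally, identifying the negative part of $C_{\lambda,\mathbf{p}}$ is logically equivalent to verifying that $C_{\lambda,\mathbf{p}}-\beta\overline{\ell}$ meets every irreducible curve on $X_{\mathbf{p}}$ nonnegatively, i.e.\ to the statement of the lemma itself; the numerical facts you defer to (``inequalities such as $r_i\geq\beta$ \dots can be extracted from the nefness of $C_\lambda$ on nearby very general fibers and the eigenvector structure'') are precisely what has to be proven, and they do not follow from nefness of $C_\lambda$ alone, since the required inequalities concern $C_{\lambda,\mathbf{p}}-\beta\overline{\ell}$ against infinitely many candidate curve classes. The paper carries out this verification: it rescales to $L=\frac{1}{1-\beta}(C_{\lambda,\mathbf{p}}-\beta\overline{\ell})$, uses that $-K_{X_{\mathbf{p}}}$ is an irreducible curve together with adjunction to get $C^2\geq -2$ for every other irreducible curve, computes $\sum t_i^2=1-\frac{2\beta^2}{(1-\beta)^2}$ and applies Cauchy--Schwarz to reduce to curves of degree $d\leq 6$, then disposes of $d=1,2$ by genericity of the point configuration in $W_0$ and of $3\leq d\leq 6$ by a finite enumeration. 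Without this reduction to a finite check (or some substitute for it), your proposal has a genuine gap at its central step.
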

 
 \begin{proof}
 \[L = _{def} \frac{1}{1-\beta}(C_{\lambda, \mathbf{p}} - \beta\overline{\ell}) = H - \sum_{i=1}^{10}t_iE_i\]
 For any curve class $C = dH - \sum_{i=0}^{10}a_iE_i$, it is sufficient to prove $L \cdot C \geqslant$ 0, i.e. 
 \begin{equation}\label{sjl6}
 d \geqslant \sum_{i=1}^{10}a_it_i
 \end{equation}
 If $C$ is just the strict transform of the elliptic curve containing these ten points, then $C =-K_{X_{\mathbf{p}}}=3H-\sum_{i=1}^{10}E_i,~\overline{\ell}\cdot C=0$.
 \[ (C_{\lambda, \mathbf{p}} - \beta\overline{\ell}) \cdot C =-C_{\lambda, \mathbf{p}} \cdot K_{X_{\mathbf{p}}} = 0\]
 Thus we can assume that $C$ is an irreducible curve different from this curve. In this case,  $K_{X_{\mathbf{p}}} \cdot C \leqslant 0$, because $-K_{X_{\mathbf{p}}}$ is just the curve class. Since $C \cdot K_{X_{\mathbf{p}}} + C \cdot C \geqslant -2$ due to adjunction formula, we get $C \cdot C \geqslant -2$, i.e.
\begin{equation}
 d^2+2 \geqslant \sum_{i=1}^{10}a_i^2
\end{equation}
 Also, we know that $\sum_{i=1}^{10}t_i^2 = 1 - \frac{2\beta^2}{{(1 - \beta)}^2}$ by calculating $L^2$ directly. Then using Cauchy-Swharz inequality:
 \[(\sum_{i=0}^{10}a_it_i)^2 \leqslant (\sum_{i=0}^{10}a_i^2) \cdot (\sum_{i=0}^{10}t_i^2) \leqslant (1 - \frac{2\beta^2}{{(1 - \beta)}^2})\cdot(d^2 +2)  \]
 When $d > 6$, the right side is less than $d^2$. We only need to check (\ref{sjl6}) for those curve class with $d = 1, 2, 3, 4, 5, 6$. 

$L$ is approximately:  $H-0.354E_1-0.342E_2-0.304E_3-0.371E_4-0.363E_5-0.336E_6-0.260E_7-0.253E_8-0.235E_9
-0.181E_{10}$.

When $d=1$, $C$ corresponds to a line on $\mathbb{P}^2$. For general $\mathbf{p}=(p_1,\cdots,p_{10})\in W_0$, we may assume no three points of $p_1,\cdots,p_{10}$ are colinear other than $p_1,p_2,p_3$, and $p_ip_j$ is not tangent to the ellptic curve passing through these points. So $C=\bar{l}$ or $C=H-E_i-E_j$. In the second case, (\ref{sjl6}) becomes $1\ge t_i+t_j$, and this can be checked using the numerical data above. In the first case, $L\cdot \bar{l}=0$ by our construction of $L$.

When $d=2$, the argument is quite similar. For general $\mathbf{p}\in W_0$, we may assume no six points of $p_1,\cdots,p_{10}$ lie on a quadratic curve, and if every quadratic curve passing through five of these points intersects the ellptic curve transversely. Then $C$ can be written as $H-\Sigma_{k=1}^5 E_{i_k}$, and (\ref{sjl6}) can be easily verified in this case.

When $3\le d \le 6$, we consider the restrictions on $C$: $C\cdot K_{\mathbf{p}} \le 0,~C\cdot C \ge -2$. Rewrite these inequalities in term of $a_i$, we get: 
\begin{equation}\label{sjl7}
d^2+2 \geqslant \sum_{i=1}^{10}a_i^2,~3d\ge \sum_{i=1}^{10}a_i
\end{equation}

It is easy to see there are only finitely many possibilities for the choice of $a_i$, so we can check (\ref{sjl6}) for all the $a_i$ satisfying (\ref{sjl7}). Since $t_4>t_5>t_1>t_2>t_6>t_3>t_7>t_8>t_9>t_{10}$ (see the approximate value of $L$), we may assume $a_4\ge a_5\ge a_1\ge a_2\ge a_6 \ge a_3\ge a_7\ge a_8\ge a_9\ge a_{10}$. Also we only need to consider the extreme case, which means if we substitute $a_{10}$ by $a_{10}+1$ while keeping other $a_i$ invariant, then (\ref{sjl7}) cannot hold. Under these conditions, all possibilities are listed below and we could see that (\ref{sjl6}) hold in each case.

\begin{tabular}{|l|l|l|l|l|l|l|l|l|l|l|l|}
\hline
 d &$a_1$ &$a_2$ &$a_3$ &$a_4$ &$a_5$ &$a_6$ &$a_7$ &$a_8$ & $a_9$ &$a_{10}$ & $d-\sum_{i=1}^{10}a_it_i $ \\\hline
3 &1&0&0&3&1&0&0&0&0&0&1.169\\\hline
3 &1&1&0&2&2&1&0&0&0&0&0.500\\\hline
3 &1&1&1&2&1&1&1&1&0&0&0.045\\\hline
3 &1&1&1&1&1&1&1&1&1&0&0.181\\\hline
4 &1&0&0&4&1&0&0&0&0&0&1.798\\\hline
4 &0&0&0&3&3&0&0&0&0&0&1.798\\\hline
4 &2&1&0&3&2&0&0&0&0&0&1.110\\\hline
4 &1&1&1&3&2&1&1&0&0&0&0.564\\\hline
4 &1&1&1&3&1&1&1&1&1&1&0.257\\\hline
4 &2&2&1&2&2&1&0&0&0&0&0.500\\\hline
4 &2&1&1&2&2&1&1&1&1&0&0.093\\\hline
5 &1&0&0&5&1&0&0&0&0&0&2.426\\\hline
5 &1&1&0&4&3&0&0&0&0&0&1.730\\\hline
5 &2&1&1&4&2&1&0&0&0&0&1.098\\\hline
5 &1&1&1&4&2&1&1&1&1&0&0.704\\\hline
5 &3&0&0&3&3&0&0&0&0&0&1.735\\\hline
5 &2&2&0&3&3&1&0&0&0&0&1.070\\\hline
5 &2&1&1&3&3&1&1&1&0&0&0.594\\\hline
5 &2&2&1&3&2&2&1&0&0&0&0.532\\\hline
5 &2&2&1&3&2&1&1&1&1&1&0.199\\\hline
5 &2&2&2&2&2&2&1&1&1&0&0.111\\\hline
6 &1&0&0&6&1&0&0&0&0&0&3.054\\\hline
6 &2&0&0&5&3&0&0&0&0&0&2.346\\\hline
6 &1&1&1&5&3&1&0&0&0&0&1.718\\\hline
6 &2&2&0&5&2&1&0&0&0&0&1.689\\\hline
6 &2&1&1&5&2&1&1&1&0&0&1.214\\\hline
6 &2&1&0&4&4&1&0&0&0&0&1.417\\\hline
6 &3&2&0&4&3&0&0&0&0&0&1.680\\\hline
6 &3&1&1&4&3&1&1&0&0&0&1.122\\\hline
6 &2&2&1&4&3&2&0&0&0&0&1.058\\\hline
6 &2&2&1&4&3&1&1&1&1&0&0.646\\\hline
6 &3&3&1&3&3&1&0&0&0&0&1.070\\\hline
6 &3&2&1&3&3&2&1&1&0&0&0.562\\\hline
6 &2&2&2&3&3&2&2&0&0&0&0.606\\\hline
6 &2&2&2&3&3&2&1&1&1&1&0.195\\\hline
\end{tabular}
\\
 \end{proof}
 
 \begin{cor}
 $L$ is big, and so is $C_{\lambda,\mathbf{p}}$, for general $\mathbf{p}\in W_0$.
 \end{cor}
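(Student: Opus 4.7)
The plan is to derive this corollary directly from the preceding lemma via the standard fact that on a smooth projective surface a nef divisor is big if and only if it has positive self-intersection, and then transfer bigness from $L$ to $C_{\lambda,\mathbf{p}}$ using the effectivity of $\overline{\ell}$. There is no real obstacle here; the hardest input (the nefness of $L$) was already done in the previous lemma, so this corollary is essentially a one-line consequence together with a sign/positivity check on $\beta$.

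First I would observe that $\beta > 0$, since $r_1 + r_2 + r_3 > 1$ was recorded in Section~1, and that $\beta < 1$ (which is easy to verify numerically, or by noting that Fact~\ref{fact1} gives $\sum_i r_i^2 = 1$, so Cauchy--Schwarz forces $r_1+r_2+r_3 \le \sqrt{3}$ and therefore $\beta \le (\sqrt{3}-1)/2 < 1$). Hence $L = \frac{1}{1-\beta}(C_{\lambda,\mathbf{p}} - \beta\overline{\ell})$ is a positive rescaling of a nef divisor, so by the previous lemma $L$ is itself nef for general $\mathbf{p} \in W_0$.

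Next I would compute $L^2$ using the formula $\sum_i t_i^2 = 1 - \frac{2\beta^2}{(1-\beta)^2}$ that already appeared in the lemma's proof. Since $L = H - \sum t_i E_i$, one immediately gets
\[
L^2 = 1 - \sum_{i=1}^{10} t_i^2 = \frac{2\beta^2}{(1-\beta)^2} > 0.
\]
Because $L$ is nef with positive self-intersection on the smooth surface $X_{\mathbf{p}}$, the standard criterion (e.g.\ \cite{2}) gives that $L$ is big.

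Finally, to conclude bigness of $C_{\lambda,\mathbf{p}}$, I would write
\[
C_{\lambda,\mathbf{p}} = (1-\beta)\,L + \beta\,\overline{\ell}.
\]
On $X_{\mathbf{p}}$ with $\mathbf{p}\in W_0$ the divisor $\overline{\ell}=H-E_1-E_2-E_3$ is represented by the strict transform of the line through the first three (colinear) points, so it is effective. Since $(1-\beta)L$ is big, $\beta\,\overline{\ell}$ is effective, and $1-\beta, \beta > 0$, the sum of a big and an effective $\mathbb{R}$-divisor is big, so $C_{\lambda,\mathbf{p}}$ is big as claimed. Combined with the lemma preceding it, this shows $\mathrm{vol}_{X_{\mathbf{p}}}(C_{\lambda,\mathbf{p}})>0$ for general $\mathbf{p}\in W_0$, and then applying Lemma~3 iteratively yields the same on each $W_n$, completing the proof of Theorem~\ref{paper1}.
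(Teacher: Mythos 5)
Your proposal is correct and matches the paper's argument: both deduce bigness of $L$ from nefness (the preceding lemma) together with $L^2>0$, and then conclude that $C_{\lambda,\mathbf{p}}=(1-\beta)L+\beta\overline{\ell}$ is big as the sum of a big divisor and an effective one. Your extra verifications that $0<\beta<1$ and that $L^2=\frac{2\beta^2}{(1-\beta)^2}$ simply make explicit what the paper leaves implicit.
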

 \begin{proof}
 $L^2>0$, thus it is big since it is nef on general fibers. $C_{\lambda,\mathbf{p}}$ is the sum of a big divisor and an effective divisor, so it must be big.
 \end{proof}

 \section{acknowledgement}
 
The authors want to thank intellectual merit of John Lesieutre for his work \cite{1}. This paper forms a part of authors' undergraduate theses written under the advisor Chenyang Xu. In particular, the authors would like to thank Xu for suggesting the question and for generously sharing his ideas related to this paper. 

Junliang Shen was supported by grant ERC-2012-AdG-320368-MCSK in the group of Pandharipande at ETH Z\"urich. Lue Pan was supported by First Year Fellowship in Natural Sciences and Engineering.

 \end{document}